\newtheorem{Thm}{Theorem}[section]
\newtheorem{Definition}{Definition}[section]
\newtheorem{Ex}{Example}[section]
\date{} 
\begin{document}

\centerline{} 

\centerline{} 

\centerline{\Large{\bf$ \alpha $ $^m $ Continuous Maps in  Topological Spaces}}

\centerline{}

\centerline{\bf {Milby Mathew}} 

\centerline{} 

\centerline{Research Scholar} 

\centerline{Karpagam University} 

\centerline{Coimbatore-32, India} 

\centerline{milbymanish@yahoo.com}
\centerline{} 

\centerline{\bf {R. Parimelazhagan   }} 

\centerline{} 

\centerline{Department of Science and Humanities} 

\centerline{Karpagam College of Engineering} 

\centerline{Coimbatore-32, India} 

\centerline{pari\_kce@yahoo.com}
\centerline{} 

\centerline{\bf {S.Jafari   }} 

\centerline{} 

\centerline{College of Vestsjaelland South} 

\centerline{Herrestraedell} 

\centerline{4200 Slagelse,Denmark} 

\centerline{jafaripersia@gmail.com}

\begin{abstract}
The main aim of the present paper is to introduce new classes of functions called $ \alpha $ $^m $ continuous maps and $ \alpha $ $^m $  irresolute maps.  We   obtain some characterizations of these classes and properties are studied.
\end{abstract} 

{\bf Mathematics Subject Classification(2010):} 54 C10, 54C05 \\

{\bf Keywords:} $\alpha$$^m$-closed set,$ \alpha $ $^m $ -continuous maps,$ \alpha $ $^m $- irresolute maps

\section{Introduction} 
        Ganster and Reilly [9], Levine [10,11], Marcus [18], Mashour [16]et al have introduced LC-continuity, weak continuity, semi continuity ,quasi continuity,  $ \alpha $ -continuity.Balachandran [2] have introduced and studied generalized semi -continuous maps,semi locally continuous maps,semi-generalized locally continuous maps and generalised locally continuous maps. Maki  and  Noiri studied the  pasting lemma  for $ \alpha $ continuous mappings .  Milby and  R.Parimelazhagan  introduced and studied the properties of $ \alpha $ $^m $ -closed sets in topological spaces.   Crossley and Hildebrand[4]  introduced and investigated irresolute functions which are stronger than semi continuous maps but are indepedent of continuous maps.Since then several researchers have introduced several strong and weak forms of irresolute functions.Di Maio [6],Faro [7],Cammaroto and Noiri [3],  Maheswari  and  Prasad[13], Sundaram [23]  and  Palanimani[22]  have   introduced and studied quasi irresolute and strongly irresolute maps,strongly $ \alpha $ - irresolute maps,almost irresolute maps,$ \alpha $ -irresolute maps and gc- irresolute maps  are respectively.
        
\section{Preliminaries} 
 Before entering into our work, we recall the following definitions which are due to Levine.
 \begin{Definition}{[17]:}
 A subset A of a topological space $(X,\tau)$ is called a pre-open set if $ A\subseteq$ int(cl(A)) and pre-closed set if $cl(int(A))\subseteq A .$
 \end{Definition} 
 \begin{Definition}{[11]} 
  A subset A of a topological space $(X,\tau)$ is called a semi-open set if $A\subseteq cl(int(A))$ and semi closed set if $ int(cl(A))\subseteq A .$
 \end{Definition} 
  \begin{Definition}{[21]} 
 A subset A of a topological space $(X,\tau)$ is called an $\alpha$-open set if $A\subseteq int(cl(int(A)))$ and an $ \alpha$ -closed set if $cl(int(cl(A)))\subseteq A $.
 \end{Definition} 
 \begin{Definition}{[1]} 
 A subset A of a topological space $(X,\tau)$ is called a semi pre-open set $(\beta $-open set) if $A\subseteq cl(int(cl(A)))$ and semi-preclosed set $(\beta $ closed set) if  $int(cl(int(A)))\subseteq A $.
  \end{Definition} 
   \begin{Definition} {[12]}
  A subset A of a topological space $(X, \tau)$ is called a g-closed if $ cl(A) \subseteq U $ , whenever $A \subseteq U $ and U is open in $(X, \tau)$. 
  \end{Definition} 
    \begin{Definition}{[14]} 
  A subset A of a topological space $(X, \tau)$ is called a generalized $ \alpha$ -closed (briefly $g\alpha$ -closed) if $\alpha cl(A)\subseteq U$ whenever $ A \subseteq U$ and U is $\alpha$-open in $(X, \tau)$.
    \end{Definition}
  \begin{Definition}{[20]} A subset A of a topological space $(X, \tau)$ is called weakly generalized closed set (briefly wg closed ) if cl(int(A))$ \subseteq U $whenever $A \subseteq U $ and U is open in X.

    \end{Definition}
    \begin{Definition}{[15]}
 A subset A of a topological space $(X, \tau)$ is called weakly generalized  $\alpha$ -closed set (briefly $ wg\alpha$-closed ) if $  \tau ^\alpha- cl(int(A)) \subset U $ whenever $ A \subset U $ and U is $\alpha$ - open in $(X,\tau)$.
    \end{Definition}

\begin{Definition} A function  f:  ( X, $ \tau ) $ $\rightarrow $ ( Y, $ \sigma )$ from a topological space  X  into a topological space Y is called g -continuous if   f$^{-1}$ (V) is g closed in X for every closed set V of Y.
    \end{Definition}

\begin{Definition} A function  f:  ( X, $ \tau ) $ $\rightarrow $ ( Y, $ \sigma )$ from a topological space  X  into a topological space Y is called semi -generalised continuous  (briefly sg- continuous) if  f$^{-1}$ (V)  is sg- closed in X for evey closedset V of X.

    \end{Definition}
\begin{Definition} A functionf: $ X\rightarrow   Y$   is said to be  $ \alpha$- g continuous if  f$^{-1}$ (U) is    $\alpha$ g - open in X for each open set U of Y.

    \end{Definition}
\begin{Definition} A function f: $ X\rightarrow   Y$  is said to be weakly generalised continuous(wg-continuous) if the inverse image of every open set in Y is wg-open in X.

    \end{Definition}
\begin{Definition} A function f: $ X\rightarrow   Y$  is w-continuous if f$^{-1}$ (U) is w-openset in X for each openset U of Y. 

    \end{Definition}
\begin{Definition} A function  f:  ( X, $ \tau ) $ $\rightarrow $ ( Y, $ \sigma )$ from a topological space  X  into a topological space Y is called irresolute if  f$^{-1}$ (V) is semi closed in X for every semi-closed set V of Y.

    \end{Definition}
\begin{Definition}{[19]} A subset A of a topological space $(X,\tau)$ is called  $\alpha$$^m$-closed set if $ int(cl(A)) \subseteq U $   whenever $A \subseteq U $ and U is $\alpha$-open.

 \end{Definition}
\begin{center}
\section{ $\alpha$$^m$ continuous functions} 
\end{center}
 In this section we  introduce the concept of $\alpha$$^m$  continuous functions.
\begin{Definition} 
A map f: $ X\rightarrow   Y$ from a topological space X into a topological space Y is called $\alpha$$^m$ continuous if the inverse image of every closed set in Y is $\alpha$$^m$ closed set in X.
\end{Definition} 
\begin{Thm} 
 If a map f: $ X\rightarrow   Y$  from a topological space into a topological space Y is continuous then it is  $\alpha$$^m$ continuous but not conversely.
 \end{Thm}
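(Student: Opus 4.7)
The forward direction reduces to a single structural fact about $\alpha^m$-closed sets: \emph{every closed set is $\alpha^m$-closed.} My plan is to prove this observation first. Let $A \subseteq X$ be closed, and suppose $A \subseteq U$ with $U$ an $\alpha$-open set. Since $A$ is closed, $\mathrm{cl}(A) = A$, so $\mathrm{int}(\mathrm{cl}(A)) = \mathrm{int}(A) \subseteq A \subseteq U$. This is exactly the defining condition for $A$ to be $\alpha^m$-closed (Definition 2.14).

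With this fact in hand, the theorem follows immediately. Let $f \colon X \to Y$ be continuous and let $V$ be an arbitrary closed set in $Y$. By continuity, $f^{-1}(V)$ is closed in $X$, and by the observation above, $f^{-1}(V)$ is therefore $\alpha^m$-closed in $X$. Since $V$ was arbitrary, $f$ satisfies the definition of $\alpha^m$-continuity.

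For the ``not conversely'' clause, the plan is to exhibit a small finite counterexample. I would take $X = \{a,b,c\}$ with a topology $\tau$ and $Y = \{a,b,c\}$ with a topology $\sigma$ chosen so that some subset $A \subseteq X$ is $\alpha^m$-closed but not closed in $X$ (for instance, by choosing $\tau$ so that the only $\alpha$-open set containing $A$ is $X$ itself, making the $\alpha^m$-closedness condition vacuous, while $A$ fails to be closed in $\tau$). I would then let $f$ be the identity, or a suitable bijection, and pick $\sigma$ so that $A = f^{-1}(C)$ for some closed set $C$ of $Y$. This yields a map whose inverse images of closed sets are all $\alpha^m$-closed, though $f^{-1}(C) = A$ fails to be closed, so $f$ is $\alpha^m$-continuous but not continuous.

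The forward implication is essentially trivial once the inclusion $\{\text{closed}\} \subseteq \{\alpha^m\text{-closed}\}$ is noted, so the only real work is in the counterexample. The main obstacle will be choosing $\tau$ and $\sigma$ concretely so that the $\alpha$-open sets of $X$ are easy to enumerate and one can verify by inspection both that the relevant $A$ is $\alpha^m$-closed and that it genuinely is not closed; finite three-point spaces with a carefully chosen non-$T_0$ topology suffice.
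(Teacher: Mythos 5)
Your forward direction is correct and follows the paper's argument exactly, with one improvement: where the paper merely cites its earlier work [19] for the fact that every closed set is $\alpha^m$-closed, you prove it inline, and your one-line verification is right ($A$ closed gives $\mathrm{int}(\mathrm{cl}(A))=\mathrm{int}(A)\subseteq A\subseteq U$ for any $\alpha$-open $U\supseteq A$). Once that inclusion is in hand, pulling back a closed set along a continuous map and invoking it is precisely what the paper does.

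The gap is in the ``not conversely'' half: you never actually produce a counterexample, only a strategy for finding one. A claim that the converse fails is proved by a concrete, verified example and by nothing less; and here the verification is genuinely the dangerous step, because you must check that \emph{every} closed set of $Y$ pulls back to an $\alpha^m$-closed set, not just the one designated set $A$. Your vacuousness idea does work if you execute it: take $X=\{a,b,c\}$ indiscrete, so the only $\alpha$-open sets are $\emptyset$ and $X$ and hence every subset of $X$ is vacuously $\alpha^m$-closed; take $Y=\{p,q\}$ with $\sigma=\{\emptyset,\{p\},Y\}$ and $f(a)=p$, $f(b)=f(c)=q$; then $f$ is $\alpha^m$-continuous but $f^{-1}(\{p\})=\{a\}$ is not open, so $f$ is not continuous. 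It is worth noting that the paper's own example (with $\tau=\{\emptyset,\{a\},X\}$, $f(a)=f(c)=q$, $f(b)=p$) is defective as printed: there $f^{-1}(\{q\})=\{a,c\}$ is itself $\alpha$-open while $\mathrm{int}(\mathrm{cl}(\{a,c\}))=X\not\subseteq\{a,c\}$, so $\{a,c\}$ is \emph{not} $\alpha^m$-closed; the example only works with the roles of $p$ and $q$ swapped, so that the nontrivial pullback is $\{b\}$, which is $\alpha^m$-closed but not closed. That the published example itself fails this check is exactly the reason the verification you deferred cannot be waved away.
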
 
\begin{proof} 
Let  f: $ X\rightarrow   Y$  be continuous.Let F be any closed set in Y.Then the inverse image of     f$^{-1}$ (F) is closed in Y.Since every closed set is  $\alpha$$^m$ closed set(previous paper)[19],  f $^{-1}(F) $ is $\alpha$$^m$ closed in X.Therefore f is  $\alpha$$^m$ continuous.The converse need not be true as seen from the following example.
\end{proof}
\begin{Ex}
Let X=$ \{a,b,c\} $with topology     $ \tau $ =$ \{\Phi, X, \{a\}\}$ and $ Y=\{p,q \}$  and  $ \sigma$=$\{\Phi,\{p\},X\}$    Let f :(    X, $ \tau ) $   $ \rightarrow$   (Y,$ \sigma)$ be defined by f(a)=f(c)=q,f(y)=p.Then f is  $\alpha$$^m$ continuous .But f is not continuous.Since for the open set G=$\{p \}$  in Y,  f$^{-1}$(G)=$\{b\}$ is not open in X.        
  \end{Ex}
\begin{Thm} 
Let f:  ( X, $ \tau ) $ $\rightarrow $ ( Y, $ \sigma )$   be a map from a topological space(X,$\tau$) into a topological space  ( Y,$\sigma$)

(I)  The following statements are equivalent.

(a)  f is $\alpha$$^m$ continuous

(b)  The inverse image of each openset in Y is  $\alpha$$^m$ open in X.

(II) If f:( X, $ \tau ) $  $\rightarrow$  (Y,$\sigma)$  is  $\alpha$$^m$ continuous   then f(cl*(A))$ \subset$   $\overline { f(A)}$  for    every subset A of X;(here cl*(A)  as defined by Dunham[7].Further be defined  a topological $\tau$$^* $gclosure  by $\tau$$^* $ =$\{G:$ cl$^*$$($G$^c)$$=$G$^c$$\}$)

(III)The following statements are equivalent.

(a)For each point x $\in$ X   and each open set V containing f(x).Thereexist a  $\alpha$$^m$ open set U containing X suchthat f(U)$\subset$ V.

(b)For every subset A of X, f(cl*(A))$ \subset$   $\overline { f(A)}$ holds.

(c)The map  f:  ( X, $ \tau$$^* ) $ $\rightarrow $ ( Y, $ \sigma )$ from a topological space.( X, $ \tau$$^* ) $ defined by Dunham[7] into topological space  ( Y, $ \sigma )$ is continuous.
\end{Thm}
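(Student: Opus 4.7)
For part (I), the equivalence (a)$\Leftrightarrow$(b) is the standard complementation argument. I would note that the complement of an $\alpha^m$ closed set is $\alpha^m$ open (read off from Definition 2.14 in [19]), combine this with $f^{-1}(Y\setminus V)=X\setminus f^{-1}(V)$, and use that the closed sets of $Y$ are exactly the complements of the open sets. This is essentially a one-line verification.

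For part (II), the strategy is to exploit that $\overline{f(A)}$ is closed in $Y$. By hypothesis $f^{-1}(\overline{f(A)})$ is $\alpha^m$ closed in $X$, and it contains $A$. Since Dunham's $cl^*(A)$ is the smallest ``$*$-closed'' (equivalently $\alpha^m$ closed, in the present setup) superset of $A$, we get $cl^*(A)\subset f^{-1}(\overline{f(A)})$, and applying $f$ yields the desired inclusion.

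For part (III), I would run a cyclic argument (a)$\Rightarrow$(b)$\Rightarrow$(c)$\Rightarrow$(a). For (a)$\Rightarrow$(b), fix $x\in cl^*(A)$ and $V$ open with $f(x)\in V$; choose the $\alpha^m$ open $U\ni x$ with $f(U)\subset V$ supplied by (a). The defining property of $cl^*$ forces $U\cap A\neq\emptyset$, hence $V\cap f(A)\neq\emptyset$, proving $f(x)\in\overline{f(A)}$. For (b)$\Rightarrow$(c), apply (b) to $A=f^{-1}(F)$ for an arbitrary closed $F\subset Y$: $f(cl^*(f^{-1}(F)))\subset\overline{F}=F$ gives $cl^*(f^{-1}(F))\subset f^{-1}(F)$, so $f^{-1}(F)$ is $\tau^*$-closed and $f:(X,\tau^*)\to(Y,\sigma)$ is continuous. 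For (c)$\Rightarrow$(a), given $x$ and open $V\ni f(x)$, set $U=f^{-1}(V)$; by (c) this is $\tau^*$-open, hence $\alpha^m$ open (using that $\tau^*$-open sets are $\alpha^m$ open), contains $x$, and satisfies $f(U)\subset V$.

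The main obstacle is not any single implication but rather nailing down the interface between Dunham's operator $cl^*$, the derived topology $\tau^*$, and $\alpha^m$-closedness. I need three facts to hold simultaneously: $(i)$ $cl^*(A)$ is the intersection of all $\alpha^m$ closed sets containing $A$ (used in part (II) and in (a)$\Rightarrow$(b)); $(ii)$ a set is $\tau^*$-closed iff it is fixed by $cl^*$ (used in (b)$\Rightarrow$(c)); $(iii)$ every $\tau^*$-open set is $\alpha^m$ open (used in (c)$\Rightarrow$(a)). If [19] does not supply these directly, I would insert them as preliminary lemmas before executing the implications above; otherwise the rest of the argument is routine complementation and unwinding of definitions.
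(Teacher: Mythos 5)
Your proposal is correct and, for part (I), part (II), and the (a)$\Leftrightarrow$(b) half of part (III), it follows essentially the same route as the paper: complementation for (I), the observation that $f^{-1}(\overline{f(A)})$ is an $\alpha^m$ closed superset of $A$ for (II), and the same two neighbourhood-style arguments for (III)(a)$\Leftrightarrow$(b). The genuine difference is in how much of (III) gets proved. The paper's proof establishes only (a)$\Rightarrow$(b) and (b)$\Rightarrow$(a) and never addresses statement (c) at all, so the paper's own proof of the theorem as stated is incomplete; your cyclic scheme (a)$\Rightarrow$(b)$\Rightarrow$(c)$\Rightarrow$(a) actually covers (c). Your (b)$\Rightarrow$(c) step (apply (b) to $A=f^{-1}(F)$ for $F$ closed, conclude $cl^*(f^{-1}(F))=f^{-1}(F)$, hence $\tau^*$-closedness) is exactly the missing ingredient. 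The price is that your (c)$\Rightarrow$(a) leg needs your fact $(iii)$ (every $\tau^*$-open set is $\alpha^m$ open), which is the strongest of your three interface facts: it requires that intersections of $\alpha^m$ closed sets be $\alpha^m$ closed, so that $cl^*$-fixed sets are themselves $\alpha^m$ closed. The paper does assert that intersection property elsewhere (proof of its restriction theorem, citing [19]), so within the paper's framework your cycle closes; alternatively you could avoid $(iii)$ entirely by proving (c)$\Rightarrow$(b) instead --- monotonicity of $cl^*$ alone gives $cl^*(A)\subset cl^*(f^{-1}(\overline{f(A)}))=f^{-1}(\overline{f(A)})$, hence $f(cl^*(A))\subset\overline{f(A)}$ --- and then reusing the paper's (b)$\Rightarrow$(a). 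You are also right to flag fact $(i)$: both you and the paper silently replace Dunham's $g$-closed-based operator $cl^*$ by its $\alpha^m$ closed analogue, and neither [7] nor [19] literally supplies that identification, so stating it as a preliminary lemma is the honest way to present the argument.
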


\begin{proof}

(I) Assume that  f: $ X \rightarrow Y $    is   $\alpha$$^m$ continuous. Let G be open in Y.Then G$^c$ is closed in Y.Since f  is $\alpha$$^m$ continuous ,    f$^{-1}$ (G$^c$) is $\alpha$$^m$  closed in X .   But     f$^{-1}$ (G$^c$)=X- f$^{-1}$ (G).Thus X- f$^{-1}$ (G) is  $\alpha$$^m$ closed in X and so f$^{-1}$ (G) is $\alpha$$^m$ open in X .
Therefore (a)  $\Rightarrow  $(b).

Conversely assume that the inverse image of each open set in Y is $\alpha$$^m$ open in X.Let F be any closedset in Y.Then$ F^c$ is open in Y.By assumption,f$^{-1}$ (F$^c$)  is  $\alpha$$^m$ open in X.But f$^{-1}$ (F$^c$) = X- f$^{-1}$ (F).Thus X- f$^{-1}$ (F) is  $\alpha$$^m$ open in X and so  f$^{-1}$ (F) is  $\alpha$$^m$ closed in X.
Therefore f is  $\alpha$$^m$ continuous .Hence  (b)  $\Rightarrow  $(a).Thus (a) and (b) are equivalent.

(II)Assume that  f  is   $\alpha$$^m$ continuous.Let A be any subset of X.Then $\overline { f(A)}$ is a closed set in Y.Since f is  $\alpha$$^m$ continuous  f $^{-1}$($\overline { f(A)})$ is  $\alpha$$^m$ closed in X and it contains A.But cl*(A) is the intersection of all  $\alpha$$^m$ closed sets containing A.Therefore cl*(A)$\subset$   f $^{-1}$($\overline { f(A)})$  and so   f(cl*(A))$ \subset$   $\overline { f(A)}$ .

(III)   (a) $\Rightarrow  $(b)
Let y$\in$ f(cl*(A)) and let V be any open neighbourhood of y.Then thereexist a point x $\in$ X and a $\alpha$$^m$ open set U suchthat f(x)=y,   x    $\in$    U,     x$\in$ cl*(A)    and     f(U)  $\subset$  V.      Since      x   $\in$  cl*(A),   U $\cap$ A   $\neq$ $\phi$    holds           and              hence     $f(A) \cap$ V   $\neq$ $\phi$.Therefore we have y=f(x)   $\in$($\overline { f(A)}$).

(b) $\Rightarrow  $ (a)
               Let   x  $\in$   X   and   V   be   any   openset    containing    f(x) .       Let  A=f$^{-1}$ (V$^c$),then x $\notin$A. Now cl*(A) $\subset$   f$^{-1}$ (f(cl*(A))) $\subset$ f$^{-1}$ (V$^c$)=A.   (ie)cl*(A) $\subset$ A. But A $\subset$  cl*(A) .Therefore A=cl*(A). Then ,since x $\notin $ cl*(A),thereexist a   $\alpha$$^m$ open set U containing x suchthat U $\cap$A =$\phi$ and hence f(U) $\subset$  f($A^c)$ $\subset $V.
 \end{proof}
\begin{Thm}
Let  f: $ X\rightarrow   Y$  be  $\alpha$$^m$  continuous map from a topological space X into a topological space Y and let H be a closed subset of Y.Then the restriction of f/H  :H  $ \rightarrow$ Y is  $\alpha$$^m$  continuous where H is endowed with the relative topology.
 \end{Thm}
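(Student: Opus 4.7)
The plan is to work directly from the definition of $\alpha^m$-continuity via preimages (assuming the natural reading that $H$ is a closed subset of $X$, since otherwise the restriction $f|_H$ would not be defined). Given an arbitrary closed set $F$ in $Y$, I need to show $(f|_H)^{-1}(F)=f^{-1}(F)\cap H$ is $\alpha^m$-closed in the subspace $H$. Since $f$ is $\alpha^m$-continuous, the set $A:=f^{-1}(F)$ is $\alpha^m$-closed in $X$, so the theorem reduces to the following auxiliary fact: if $A$ is $\alpha^m$-closed in $X$ and $H$ is closed in $X$, then $A\cap H$ is $\alpha^m$-closed in $H$ under the relative topology.

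To prove this auxiliary fact, I would fix an $\alpha$-open set $U$ in $H$ with $A\cap H\subseteq U$ and try to show $\mathrm{int}_H(\mathrm{cl}_H(A\cap H))\subseteq U$. The first key observation is that because $H$ is closed in $X$, the $X$-closure of any subset of $H$ is again contained in $H$, so $\mathrm{cl}_H(A\cap H)=\mathrm{cl}_X(A\cap H)\subseteq \mathrm{cl}_X(A)$. One can then compare $\mathrm{int}_H$ with $\mathrm{int}_X$ using standard subspace identities. The natural strategy is to produce an $\alpha$-open set $V$ in $X$ satisfying $V\cap H=U$ and $A\subseteq V$, so that the $\alpha^m$-closedness of $A$ in $X$ yields $\mathrm{int}_X(\mathrm{cl}_X(A))\subseteq V$, which can then be intersected with $H$ to obtain the required inclusion.

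A natural candidate for such an extension is $V=U\cup(X\setminus H)$: since $H$ is closed, $X\setminus H$ is open, and combining this with the $\alpha$-openness of $U$ in $H$ one expects $V$ to be $\alpha$-open in $X$. Also $A\subseteq V$ is immediate because points of $A$ lying in $H$ are in $U$, while the rest lie automatically in $X\setminus H$. Once $V$ is shown $\alpha$-open in $X$, the $\alpha^m$-closedness of $A$ gives $\mathrm{int}_X(\mathrm{cl}_X(A))\subseteq V$, and intersecting with $H$ together with the closure identity above yields $\mathrm{int}_H(\mathrm{cl}_H(A\cap H))\subseteq U$.

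The main obstacle I expect is the verification that $V=U\cup(X\setminus H)$ is genuinely $\alpha$-open in $X$, because $\alpha$-openness is a three-level (interior-closure-interior) condition that does not behave as transparently with respect to subspaces as mere openness does. If this direct approach snags, the fallback is to replace the abstract extension argument by a hands-on manipulation of $\mathrm{int}_H(\mathrm{cl}_H(\,\cdot\,))$, using $\mathrm{cl}_H=\mathrm{cl}_X$ (because $H$ is closed) and the fact that interiors relative to $H$ shrink interiors relative to $X$, to derive the needed inclusion from $A$'s $\alpha^m$-closedness in $X$ applied to a suitably chosen $\alpha$-open superset in $X$.
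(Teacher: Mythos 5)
Your reduction is the natural one and matches the paper's reading (the statement's ``closed subset of $Y$'' is evidently a typo for $X$; the paper's own proof also treats $H$ as a closed subset of $X$). But the proposal has a genuine gap, and it is not where you expected it. The step you flagged as the main obstacle --- that $V=U\cup(X\setminus H)$ is $\alpha$-open in $X$ when $U$ is $\alpha$-open in the closed subspace $H$ --- is in fact true: writing $\mathrm{int}_H(U)=G\cap H$ with $G$ open in $X$, one has $G\cup(X\setminus H)\subseteq \mathrm{int}_X(V)$, and combining $U\subseteq \mathrm{int}_H(\mathrm{cl}_H(\mathrm{int}_H(U)))$ with $\mathrm{cl}_H=\mathrm{cl}_X$ on subsets of $H$ yields $V\subseteq \mathrm{int}_X(\mathrm{cl}_X(\mathrm{int}_X(V)))$. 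The fatal step is the one you treated as routine. From $\mathrm{int}_X(\mathrm{cl}_X(A))\subseteq V$ you may conclude $\mathrm{int}_X(\mathrm{cl}_X(A))\cap H\subseteq V\cap H=U$, but what is required is $\mathrm{int}_H(\mathrm{cl}_H(A\cap H))\subseteq U$, and relative interiors satisfy only $\mathrm{int}_H(S)\supseteq \mathrm{int}_X(S)\cap H$ for $S\subseteq H$: the inclusion you need runs in the opposite direction, and it can fail badly (for instance $\mathrm{int}_X(H)$ may be empty while $\mathrm{int}_H(H)=H$). Nothing in your argument bounds the subspace interior from above by the ambient one.

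Moreover, this gap cannot be patched, because your auxiliary fact --- and the theorem itself, under the reading that both you and the paper adopt --- is false. Let $X=\mathbb{R}$ with the usual topology, $H=\{0\}\cup\{1/n : n\geq 1\}$ (closed in $X$), $A=\{1/n : n\geq 1\}$, $Y=\{p,q\}$ with $\sigma=\{\emptyset,\{p\},Y\}$, and let $f:X\rightarrow Y$ send every point of $A$ to $q$ and every other point to $p$. The preimages of the closed sets $\emptyset$, $\{q\}$, $Y$ are $\emptyset$, $A$, $X$; since $\mathrm{int}_X(\mathrm{cl}_X(A))=\mathrm{int}_X(H)=\emptyset$, the set $A$ is vacuously $\alpha^m$-closed in $X$, so $f$ is $\alpha^m$-continuous. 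But in the subspace $H$ the set $(f/H)^{-1}(\{q\})=A$ is open (each $1/n$ is isolated in $H$), hence $\alpha$-open, while $\mathrm{int}_H(\mathrm{cl}_H(A))=\mathrm{int}_H(H)=H\not\subseteq A$; so $A$ is not $\alpha^m$-closed in $H$, and $f/H$ is not $\alpha^m$-continuous. For comparison, the paper's own proof founders at the same spot, though differently: it asserts $\overline{H_1}\subseteq G$ from the $\alpha^m$-closedness of $H_1$ in $X$, whereas the definition yields only $\mathrm{int}(\mathrm{cl}(H_1))\subseteq G$, and it verifies the defining condition only for open, never for $\alpha$-open, subsets of $H$. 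So neither your argument nor the paper's is valid, and no argument can be: the statement as written is simply not a theorem.
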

\begin{proof}
Let F be  any  closed subset in Y.Since f is $\alpha$$^m$  continuous, f$^{-1}$ (F) is   $\alpha$$^m$  closed in X.Levine has proved that intersection of a closed set and also we  proved  that  intersection of  $\alpha$$^m$ closed sets  is   $\alpha$$^m$ closed. 
Thus if    f $^{-1}$  (F)  $\cap $   H = H  {$_1$}.then H {$_1$ } is a  $\alpha$$^m$  closed in X.
 since(f/H)$^{-1}$(F)= H$_1$ ,it is sufficient to proved that  H$_1$  is   $\alpha$$^m$  closed in H.Let G be any open set  of H suchthat   G$_1$ $\supset$  H$_1$ .
Let  G$_1$ =G $\cap$ H where G is open in X.
Now   H$_1$  $\subset$ G  $\cap$ H  $\subset$ G.Since  H$_1$  is   $\alpha$$^m$  closed in X,$ \overline  {H_1} $ $ \subset$ G.Now cl$_H$ (H$_1$)=$ \overline  {H_1} $  $\cap$ H $\subset$G $\cap$ H=G$_1$,where cl$_H$(A) is the closure of a subset A (CH) in a subspace H of X.Therefore f/H is  $\alpha$$^m$  continuous.

 \end{proof} 
\begin{Thm}

Let   $\alpha$=A $\cup$ B be a topological space with topology  $ \tau$ and Y be topological space with topology $\sigma$.
 Let f:(A, $ \tau$ /A) $\rightarrow$ (Y,$\sigma$)   and  g:(B, $ \tau$/B) $\rightarrow$ (Y,$\sigma) $    be  $\alpha$$^m$  continuous maps suchthat f(x)=g(x) for every x $\in$  A $\cap$  B.Suppose that A and B are  $\alpha$$^m$ closed sets in X.Then the combination     $\alpha$: (X, $ \tau$) $\rightarrow$ (Y,$\sigma$)  $\alpha$$^m$  continuous  .
 
  \end{Thm}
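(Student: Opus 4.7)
The plan is to run the standard pasting-lemma argument, but with $\alpha^m$-closed sets replacing ordinary closed sets. Let $h : (X,\tau) \to (Y,\sigma)$ denote the combination, so $h|_A = f$ and $h|_B = g$; this is well defined because $f$ and $g$ agree on $A\cap B$.

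First I would take an arbitrary closed set $F\subseteq Y$ and write
\[
h^{-1}(F) \;=\; f^{-1}(F) \,\cup\, g^{-1}(F),
\]
which holds because $X = A\cup B$ and the two partial maps coincide on $A\cap B$. Since $f$ is $\alpha^m$-continuous on the subspace $A$ with the relative topology, $f^{-1}(F)$ is $\alpha^m$-closed in $(A,\tau/A)$; similarly $g^{-1}(F)$ is $\alpha^m$-closed in $(B,\tau/B)$. The goal then reduces to showing that $h^{-1}(F)$ is $\alpha^m$-closed in $(X,\tau)$.

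The key step, and the one I expect to be the main obstacle, is promoting $\alpha^m$-closedness from the subspaces $A$ and $B$ up to $X$. Here the hypothesis that $A$ and $B$ are themselves $\alpha^m$-closed in $X$ is crucial: I would argue that if $C\subseteq A$ is $\alpha^m$-closed in the subspace $A$ and $A$ is $\alpha^m$-closed in $X$, then $C$ is $\alpha^m$-closed in $X$. To see this, take any $\alpha$-open set $U$ in $X$ with $C\subseteq U$; since $C\subseteq A\subseteq A$, the $\alpha^m$-closedness of $A$ gives $\operatorname{int}(\operatorname{cl}(A))\subseteq U$, and a careful comparison of the closure and interior operators in $(X,\tau)$ versus in $(A,\tau/A)$, combined with $C\subseteq A$ being $\alpha^m$-closed in $A$, should yield $\operatorname{int}(\operatorname{cl}(C))\subseteq U$. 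This transfer is the only delicate piece; the rest is bookkeeping.

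Once this subspace-to-space transfer is in hand, both $f^{-1}(F)$ and $g^{-1}(F)$ are $\alpha^m$-closed in $X$, and I would invoke the result from the authors' previous paper [19] that a finite union of $\alpha^m$-closed sets is $\alpha^m$-closed (the same closure property that was already used in the proof of the preceding theorem on restrictions). This gives that $h^{-1}(F)$ is $\alpha^m$-closed in $X$, and since $F$ was an arbitrary closed subset of $Y$, the combination $h$ is $\alpha^m$-continuous, as required.
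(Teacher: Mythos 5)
Your proposal follows the paper's own proof essentially line for line: the same decomposition $h^{-1}(F)=f^{-1}(F)\cup g^{-1}(F)$, the same key appeal to the transitivity lemma (a set that is $\alpha^m$-closed in an $\alpha^m$-closed subspace is $\alpha^m$-closed in the whole space, which the paper cites as already proved in [19]), and the same use of closure of $\alpha^m$-closed sets under finite union. The only caveat is your inline sketch of that transfer lemma: invoking the $\alpha^m$-closedness of $A$ against an $\alpha$-open set $U$ requires $A\subseteq U$, not merely $C\subseteq U$, so that sketch as written does not go through --- the correct move (and the paper's) is simply to cite the transitivity result from [19], which is all the argument needs.
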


\begin{proof}
Let F be any closed set in Y. Clearly  $\alpha$$^{-1}$(F) =  f$^{-1}$ (F) $\cup$   g$^{-1}$ (F) = C $\cup$ D where C= f$^{-1}$ (F) and D = g$^{-1}$ (F).But C is  $\alpha$$^m$ closed in A and A is  $\alpha$$^m$ closed in X and so C is  $\alpha$$^m$ closed set in X;since we have proved that if B $\subset$ A $\subset$  X, B is  $\alpha$$^m$ closed in A and A is  $\alpha$$^m$ closed in X then B is  $\alpha$$^m$ closed in X.Similarly D is  $\alpha$$^m$ closed in  X. Also C $\cup$ D is   $\alpha$$^m$ closed in X.Therefore $\alpha$$^{-1}$(F)  is 
 $\alpha$$^m$ closed in X. Hence $\alpha$ is   $\alpha$$^m$  continuous .

 \end{proof} 
\begin{center}
\section{ $\alpha$$^m$-irresolute} 
\end{center}
\begin{Definition} 
A map f: $ X\rightarrow   Y$ from a topological space X into a topological space Y is called $\alpha$$^m$ - irresolute if the inverse image of every  $\alpha$$^m$ closed set in Y  is    $\alpha$$^m$ closed set in X.
\end{Definition} 
\begin{Thm}Let  f: $ X\rightarrow   Y$ ,g: $ Y\rightarrow   Z$ be two maps.Then  their composition gof:$ X\rightarrow   Z$  is $\alpha$$^m$ - continuous if f is $\alpha$$^m$ - irresolute and g is $\alpha$$^m$ - continuous.
  \end{Thm}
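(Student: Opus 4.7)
The plan is to unwind the two definitions against a single closed test set in $Z$ and then chain the two hypotheses. Concretely, I would start by fixing an arbitrary closed set $F$ in $Z$, and show that $(g\circ f)^{-1}(F)$ is $\alpha^m$-closed in $X$; by the definition of $\alpha^m$-continuity this is exactly what needs to be verified.

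First I would apply the hypothesis that $g:Y\to Z$ is $\alpha^m$-continuous to the closed set $F\subseteq Z$: this gives that $g^{-1}(F)$ is an $\alpha^m$-closed subset of $Y$. Next I would use the fact that $f:X\to Y$ is $\alpha^m$-irresolute, which, by Definition 4.1, means that the inverse image under $f$ of any $\alpha^m$-closed set in $Y$ is $\alpha^m$-closed in $X$. Applying this to the set $g^{-1}(F)$ yields that $f^{-1}\bigl(g^{-1}(F)\bigr)$ is $\alpha^m$-closed in $X$.

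Finally I would invoke the standard identity $(g\circ f)^{-1}(F)=f^{-1}\bigl(g^{-1}(F)\bigr)$, so the set whose $\alpha^m$-closedness was just established is exactly $(g\circ f)^{-1}(F)$. Since $F$ was an arbitrary closed set of $Z$, the composition $g\circ f$ satisfies the definition of an $\alpha^m$-continuous map.

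There is no real obstacle in this argument: both hypotheses are used once, in the order ``first $g$, then $f$'', and the only non-definitional fact invoked is the trivial set-theoretic identity for preimages of compositions. The only place where care is needed is to make sure the definitions are applied to the correct type of set — a closed set in $Z$ for $g$, and the resulting $\alpha^m$-closed set in $Y$ for $f$ (not an ordinary closed set, which is why $\alpha^m$-irresoluteness, rather than mere $\alpha^m$-continuity, of $f$ is exactly the right hypothesis).
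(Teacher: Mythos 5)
Your proof is correct and takes essentially the same route as the paper's: fix a test set in $Z$, pull it back through $g$ using $\alpha^m$-continuity, then through $f$ using $\alpha^m$-irresoluteness, and conclude via $(g\circ f)^{-1}(F)=f^{-1}\bigl(g^{-1}(F)\bigr)$. The only difference is that you work with closed sets, which matches the paper's stated definitions of $\alpha^m$-continuity and $\alpha^m$-irresoluteness exactly, whereas the paper's own proof runs the same argument dually with open sets (thereby implicitly relying on the open-set characterization of Theorem 3.2(I) and an unstated open-set version of irresoluteness); if anything, your version is the cleaner one.
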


\begin{proof}
Let V be an open set in Z.Then (gof)$^{-1}$ (v)=( f$^{-1}$o g$^{-1}$)(v)= f$^{-1}$(v) where U= g$^{-1}$(v) is  $\alpha$$^m$ open in Y as g is  $\alpha$$^m$ -continuous.Since  f is  $\alpha$$^m$ -irresolute ,  f$^{-1}$(U) is  $\alpha$$^m$ open in X.Thus gof is  $\alpha$$^m$ continuous.
 \end{proof} 
\begin{Thm}Let  f: $ X\rightarrow   Y$,,g: $ Y\rightarrow   Z$ be two  $\alpha$$^m$ -irresolute  maps.Then their composition gof:$ X\rightarrow   Z$ is irresolutes maps.
 \end{Thm}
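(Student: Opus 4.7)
The plan is to mimic the structure of the previous theorem (Theorem 4.1) but apply the stronger hypothesis on $g$: instead of pulling back open sets through $g$ and using $\alpha^m$-continuity, I will pull back $\alpha^m$-closed sets through $g$ and use $\alpha^m$-irresoluteness. (I read the statement as saying that $g\circ f$ is $\alpha^m$-irresolute, since literal ``irresolute'' would require information relating $\alpha^m$-closed sets to semi-closed sets that the paper has not developed.)

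First, I would take an arbitrary $\alpha^m$-closed set $V$ in $Z$. Because $g\colon Y\to Z$ is $\alpha^m$-irresolute, Definition 4.1 applied to $g$ gives that $g^{-1}(V)$ is an $\alpha^m$-closed subset of $Y$. Next, since $f\colon X\to Y$ is also $\alpha^m$-irresolute, I would apply Definition 4.1 to $f$ with the $\alpha^m$-closed set $g^{-1}(V)\subseteq Y$ to conclude that $f^{-1}\!\bigl(g^{-1}(V)\bigr)$ is $\alpha^m$-closed in $X$.

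Finally, I would use the elementary set-theoretic identity $(g\circ f)^{-1}(V)=f^{-1}\!\bigl(g^{-1}(V)\bigr)$ to rewrite what has just been established as: $(g\circ f)^{-1}(V)$ is $\alpha^m$-closed in $X$ for every $\alpha^m$-closed $V\subseteq Z$. By Definition 4.1 this is exactly the statement that $g\circ f$ is $\alpha^m$-irresolute.

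The argument is essentially a one-line composition of definitions, so I do not anticipate a genuine obstacle; the only subtle point is that the proof needs the full strength of $\alpha^m$-irresoluteness for \emph{both} maps (not merely $\alpha^m$-continuity), because $g^{-1}(V)$ is produced as an $\alpha^m$-closed set, not a closed set, and so its preimage under $f$ can be controlled only by the irresolute hypothesis on $f$. This is the same reason why Theorem 4.1 required $f$ to be $\alpha^m$-irresolute there as well.
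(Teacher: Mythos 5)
Your proof is correct and takes essentially the same approach as the paper: both arguments simply compose the two pullbacks, using the irresoluteness of $g$ to make $g^{-1}(V)$ an $\alpha^m$-set in $Y$ and the irresoluteness of $f$ to conclude that $(g\circ f)^{-1}(V)=f^{-1}\bigl(g^{-1}(V)\bigr)$ is an $\alpha^m$-set in $X$. The only cosmetic differences are that the paper phrases the argument with $\alpha^m$-open sets while you use $\alpha^m$-closed sets (equivalent by complementation), and your reading of the garbled conclusion as ``$g\circ f$ is $\alpha^m$-irresolute'' is exactly what the paper's proof actually establishes.
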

\begin{proof}Let V be an $\alpha$$^m$- open set in Z.Consider (gof)$^{-1}$ (v)= f$^{-1}$(v) where U= g$^{-1}$(v) is  $\alpha$$^m$ open in Y as g is  $\alpha$$^m$ -irresolute.Since  f is  $\alpha$$^m$ -irresolute, f$^{-1}$(U) is $\alpha$$^m$ open in X.Thus  gof is   $\alpha$$^m$ open.

 \end{proof}
\begin{Thm}Let  f: $ X\rightarrow   Y$ ,g: $ Y\rightarrow   Z$ be two maps such that gof:$ X\rightarrow   Z$ is  $\alpha$$^m$ closed map.Then(i) if f is continuous and surjective then g is $\alpha$$^m$ closed and (ii) if g is irresolute and injective then f is $\alpha$$^m$ closed .
 \end{Thm}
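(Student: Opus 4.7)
The plan is to handle the two implications separately, in each case starting with an arbitrary closed set in the appropriate space and chasing it through the diagram $X \xrightarrow{f} Y \xrightarrow{g} Z$, so that the hypothesis on $g\circ f$ can be applied exactly once. The ``good'' side map (either $f$ or $g$) will then be used to strip off the extra piece of the composition, leaving only the conclusion about the remaining map.

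For (i), I would begin with an arbitrary closed set $F\subseteq Y$. Continuity of $f$ makes $f^{-1}(F)$ closed in $X$, so by the assumption that $g\circ f$ is $\alpha^m$-closed, $(g\circ f)(f^{-1}(F))$ is $\alpha^m$-closed in $Z$. The key move is surjectivity of $f$, which gives $f(f^{-1}(F))=F$ and hence $(g\circ f)(f^{-1}(F))=g(F)$. This identifies $g(F)$ as $\alpha^m$-closed in $Z$, proving that $g$ is $\alpha^m$-closed.

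For (ii), I would take an arbitrary closed set $F\subseteq X$. Directly from the hypothesis, $(g\circ f)(F)=g(f(F))$ is $\alpha^m$-closed in $Z$. Now I would apply $g^{-1}$: reading ``irresolute'' in the $\alpha^m$-sense of Definition 4.1 (so that preimages of $\alpha^m$-closed sets are $\alpha^m$-closed), the set $g^{-1}(g(f(F)))$ is $\alpha^m$-closed in $Y$. Injectivity of $g$ collapses this preimage back to $f(F)$, so $f(F)$ is $\alpha^m$-closed in $Y$, giving that $f$ is $\alpha^m$-closed.

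The main obstacle is terminological rather than mathematical: the definition of ``irresolute'' recalled in the preliminaries refers to semi-closed sets, but the pullback step in (ii) genuinely needs inverse images of $\alpha^m$-closed sets to be $\alpha^m$-closed. I would therefore state at the outset that ``irresolute'' is being used in the $\alpha^m$-irresolute sense that matches the usage in Theorems 4.1 and 4.2 of this section; once this convention is in place, both arguments are just a one-line application of the hypothesis followed by an image/preimage identity afforded by surjectivity or injectivity.
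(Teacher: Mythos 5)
Your proof is correct and follows essentially the same route as the paper's own: pull back a closed set, apply the hypothesis on $g\circ f$ exactly once, then use surjectivity (resp.\ injectivity) to identify $(g\circ f)(f^{-1}(F))$ with $g(F)$ (resp.\ $g^{-1}((g\circ f)(F))$ with $f(F)$). You are in fact more careful than the paper, whose proof leaves the surjectivity identity $f(f^{-1}(H))=H$ implicit in (i) and silently uses ``irresolute'' in the $\alpha^m$-irresolute sense in (ii) --- precisely the terminological gap you flag and repair.
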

\begin{proof}Let H be closed set in Y.Since f$^{-1}$(H) is closed in X. (gof)( f$^{-1}$(H)) is  $\alpha$$^m$ closed set in Z and hence g(H) is $\alpha$$^m$ closed in Z.Thus g is $\alpha$$^m$ closed.Hence(i).
Let F be closed set of X.Then (gof)(F) is $\alpha$$^m$ closed in Z and g$^{-1}$(gof)(F) is  $\alpha$$^m$ closed  in Y.Since g is injective f(F)= g$^{-1}$(gof)(F) is $\alpha$$^m$ closed  in Y.Then f is $\alpha$$^m$ closed .
\end{proof}

\end{document}